\theoremstyle{plain}
\newtheorem{definition}{Definition}
\newtheorem{lemma}{Lemma}
\newtheorem{proposition}{Proposition}
\newtheorem{remark}{Remark}
\newtheorem{theorem}{Theorem}
\numberwithin{equation}{section}
\begin{document}
\title[Proportional Sidon sets]{Sidon sets are proportionally Sidon with
small Sidon constants}
\author{Kathryn E. Hare}
\address{Dept. of Pure Mathematics\\
University of Waterloo\\
Waterloo, Ont. Canada N2L 3G1}
\email{kehare@uwaterloo.ca}
\thanks{This research was supported in part by NSERC grant 2016-03719.}
\author{Robert (Xu) Yang}
\address{Dept. of Pure Mathematics\\
University of Waterloo\\
Waterloo, Ont. Canada N2L 3G1}
\email{yangxu\_robert@hotmail.com}
\subjclass[2000]{Primary 43A46; Secondary 42A55}
\keywords{Sidon set, independent set}
\thanks{This paper is in final form and no version of it will be submitted
for publication elsewhere.}

\begin{abstract}
In his seminal work on Sidon sets, Pisier found an important
characterization of Sidonicity: A set is Sidon if and only if it is
proportionally quasi-independent. Later, it was shown that Sidon sets were
proportionally `special' Sidon in several other ways. Here, we prove that
Sidon sets in torsion-free groups are proportionally $n$-degree independent,
a higher order of independence than quasi-independence, and we use this to
prove that Sidon sets are proportionally Sidon with Sidon constants
arbitrarily close to one, the minimum possible value.
\end{abstract}

\maketitle

\section{Introduction}

Let $G$ be a compact abelian group and $\Gamma $ its discrete dual. A subset 
$E\subseteq \Gamma $ is called a Sidon set if there is a constant $C$ such
that every bounded $E$-function $\phi $ is the restriction of the Fourier
Stieltjes transform of a finite measure on $G$ of measure norm at most $%
C\left\Vert \phi \right\Vert _{\infty }$. The least such $C$ is called the
Sidon constant of $E$. Sidon sets are well known to be plentiful. Indeed,
infinite examples can be found in every infinite subset of $\Gamma $ and
include lacunary sets (in $\Gamma =\mathbb{Z}$) and independent sets.

Sidon sets have been extensively studied, yet fundamental questions remain
open. As the class of Sidon sets is closed under finite unions, it is
natural to ask whether every Sidon set is the finite union of a `nicer',
i.e., more restricted, class of interpolation sets. Important progress on
this general problem was made when Pisier \cite{P1} characterized Sidon sets
as those which are `proportionally' quasi-independent (special Sidon sets
that are independent-like). Later Ramsey \cite{Ra}, proved that Sidon sets
are proportionally $I_{0}$ (special Sidon sets where the interpolating
measure can be chosen to be discrete) in a uniform sense and subsequently
one of the authors with Graham \cite{GH5} showed that they are
proportionally $\varepsilon $-Kronecker (special Sidon sets defined by an
approximation property) under the assumption that $\Gamma $ has no elements
of finite order.

In this paper we prove that if $\Gamma $ has no elements of finite order,
then every Sidon set is proportionally Sidon with Sidon constants
arbitrarily close to one. This will be established by generalizing Pisier's
proportional quasi-independent characterization of Sidon to higher degrees
of independence. Of course, every Sidon set has Sidon constant at least one
and this is the Sidon constant for an independent set in the case that $%
\Gamma $ has no elements of finite order. But many groups, including $%
\mathbb{Z}$, have no non-trivial independent sets or even any subsets with
Sidon constant equal to one, other than one and two element sets.

Our proportionality result does not hold, in general, for groups that admit
elements of finite order as there are such groups with the property that the
Sidon constant of every non-trivial independent set is bounded away from
one. However, we do show the proportionality result holds when $\Gamma $ is
a product of finite groups with prime order tending to infinity.

\section{Definitions and Basic properties}

We begin by recalling some well-known equivalent definitions of Sidon. For
proofs of these facts and other properties of Sidon sets mentioned below, we
refer the reader to \cite{GHbook} or \cite{LR}.

\begin{definition}
A subset $E\subseteq \Gamma $ is called a \textbf{Sidon set} if whenever $%
\phi :E\rightarrow \mathbb{C}$ is a bounded function, there is a measure $%
\mu $ on $G$ with the property that $\widehat{\mu }(\gamma )=\phi (\gamma )$
for every $\gamma \in E$ and $\left\Vert \mu \right\Vert _{M(G)}\leq
C\left\Vert \phi \right\Vert _{\infty }$. The least such constant $C$ is
known as the \textbf{Sidon constant} of $E$. The set $E$ is called $\mathbf{I%
}_{0}$ if the measure $\mu $ can be chosen to be discrete.
\end{definition}

\begin{proposition}
\label{equiv}The following are equivalent:

\begin{enumerate}
\item $E$ is Sidon.

\item There are constants $C$ and $0\leq \delta <1$ such that for every $%
\phi \in $Ball$(\ell ^{\infty }(E))$ there is a measure $\mu $ on $G$ with $%
\left\Vert \mu \right\Vert _{M(G)}\leq C$ and satisfying 
\begin{equation*}
\sup_{\gamma \in E}\{\left\vert \phi (\gamma )-\widehat{\mu }(\gamma
)\right\vert \}\leq \delta .
\end{equation*}

\item For every $\phi :E\rightarrow \{\pm 1\}$ there is a measure $\mu $ on $%
G$ such that 
\begin{equation*}
\sup_{\gamma \in E}\{\left\vert \phi (\gamma )-\widehat{\mu }(\gamma
)\right\vert \}<1.
\end{equation*}

\item There is a constant $C$ such that whenever $f$ is a trigonometric
polynomial with supp$\widehat{f}\subseteq E,$ then 
\begin{equation*}
\sum_{\gamma \in E}\left\vert \widehat{f}(\gamma )\right\vert \leq C\sup
\{\left\vert f(x)\right\vert :x\in G\}.
\end{equation*}
\end{enumerate}
\end{proposition}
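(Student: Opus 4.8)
The plan is to prove the proposition by showing that each of conditions (2), (3), and (4) is equivalent to (1); conditions (2) and (4) will follow from routine functional analysis, and the real content is in (3).

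I would treat $(1)\Leftrightarrow(4)$ by duality. Assuming (4), for a bounded $\phi\colon E\to\mathbb C$ the linear functional $f\mapsto\sum_\gamma\widehat f(\gamma)\phi(\gamma)$, defined on the space of trigonometric polynomials with spectrum in $E$, has norm at most $C\|\phi\|_\infty$ for the sup norm on $G$; Hahn--Banach and the Riesz representation theorem then produce a measure $\mu$ with $\|\mu\|_{M(G)}\le C\|\phi\|_\infty$ and $\widehat\mu=\phi$ on $E$ (after reflecting $\mu$ if the Fourier convention requires it). Conversely, assuming (1), for such a polynomial $f$ I would pick a unimodular $\phi$ on $E$ with $\widehat f(\gamma)\phi(\gamma)=|\widehat f(\gamma)|$ and interpolate it by a measure $\mu$ of norm $\le C$; Parseval gives $\sum_\gamma|\widehat f(\gamma)|=\sum_\gamma\widehat f(\gamma)\widehat\mu(\gamma)\le C\|f\|_\infty$. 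For $(1)\Leftrightarrow(2)$ only $(2)\Rightarrow(1)$ needs an argument, and it is the classical successive-approximation trick: approximate $\phi$ within $\delta$ by $\widehat{\mu_0}$, apply (2) to $(\phi-\widehat{\mu_0})/\delta$, iterate, and set $\mu=\sum_k\delta^k\mu_k$, which has norm at most $C/(1-\delta)$ and satisfies $\widehat\mu=\phi$ on $E$.

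The hard part will be $(3)\Rightarrow(1)$, and within it the step of extracting \emph{uniform} constants from the $\phi$-dependent data in (3). I would argue by Baire category in the compact space $K=\{\pm 1\}^E$ with the product topology: the sets $K_n=\{\phi\in K:\exists\,\mu,\ \|\mu\|_{M(G)}\le n,\ \|\widehat\mu-\phi\|_{\ell^\infty(E)}\le 1-1/n\}$ are closed (by weak-$*$ compactness of the balls of $M(G)$) and cover $K$ by (3), so some $K_N$ contains a basic clopen set $U=\{\phi:\phi|_{F_0}=\phi_0|_{F_0}\}$ for a finite $F_0\subseteq E$. To remove the dependence on $F_0$, given an arbitrary $\psi\in\{\pm 1\}^E$ I would apply the conclusion to the member of $U$ agreeing with $\psi$ off $F_0$ and correct on the finite set $F_0$ by adding a trigonometric polynomial of norm at most $2|F_0|$ with spectrum in $F_0$; this yields fixed constants $C$ and $\delta=1-1/N<1$ such that \emph{every} $\psi\in\{\pm 1\}^E$ is approximated within $\delta$ on all of $E$ by a measure of norm $\le C$. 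Then I would upgrade in two soft steps: real functions in $\mathrm{Ball}(\ell^\infty(E))$ lie in the pointwise-closed convex hull of $\{\pm 1\}^E$, so averaging the corresponding measures and passing to a weak-$*$ limit gives the same approximation for them; and, taking the approximating measures to have real Fourier transforms (possible without increasing their norms), the residuals stay real, so the successive-approximation scheme above yields \emph{exact} interpolation of real $\phi$ by a measure of norm $\le C/(1-\delta)$. Splitting a complex $\phi$ with $\|\phi\|_\infty\le 1$ into real and imaginary parts gives (1); and $(1)\Rightarrow(3)$ is trivial, closing the circle.

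I expect the genuine obstacle to lie in the two uniformity steps of $(3)\Rightarrow(1)$ --- the Baire category argument and the finite correction on $F_0$ --- since the duality for (4), the iteration for (2), and the real/complex/exact upgrades are all soft once these are available.
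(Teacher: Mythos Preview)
The paper does not give its own proof of this proposition: it is stated as well-known background with references to \cite{GHbook} and \cite{LR}, and the only remark offered is that the iteration in $(2)\Rightarrow(1)$ yields Sidon constant at most $C/(1-\delta)$. Your sketch is correct and follows the standard arguments found in those references --- Hahn--Banach/Riesz duality for $(1)\Leftrightarrow(4)$, geometric-series iteration for $(2)\Rightarrow(1)$, and for $(3)\Rightarrow(1)$ a Baire-category extraction of uniform constants on $\{\pm 1\}^E$, a finite correction on $F_0$ (your polynomial $\sum_{\gamma\in F_0}(\psi(\gamma)-\phi_0(\gamma))\gamma$ indeed has sup-norm $\le 2|F_0|$ and preserves the bound $1-1/N$ on all of $E$), and then the convex-hull and Hermitian-symmetrization upgrades.
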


The minimal constant $C$ satisfying (4) is (also) the Sidon constant. An
iterative argument can be given to show that if item (2) is satisfied, then
the Sidon constant of $E$ is at most $C/(1-\delta )$. In particular, an
independent set in a group with no elements of finite order has Sidon
constant $1$. From (3) one can immediately see that independent sets in
groups with elements of finite order are also Sidon.

Finite sets $F$ are always Sidon with Sidon constant at most $\sqrt{%
\left\vert F\right\vert }$. In $\mathbb{Z}$, one and two element sets have
Sidon constant one, but this is never the case for sets of three or more
elements, see \cite{Ne}. A classical example of an infinite Sidon set is the
subset $E=\{3^{n}\}_{n=1}^{\infty }\subseteq $ $\mathbb{Z}$. Indeed, given $%
\phi \in \ell ^{\infty }(E)$ with $\left\Vert \phi \right\Vert _{\infty
}\leq 1/2,$ we can take as the interpolating measure the Riesz product
measure%
\begin{equation*}
\mu =\prod_{j=1}^{\infty }\left( 1+2\mathcal{R}(\phi
(3^{n})e^{i3^{n}x})\right)
\end{equation*}%
where the infinite product notation means $\mu $ is the weak $\ast $ limit
in $M(\mathbb{T)}$. As $\left\Vert \mu \right\Vert _{M(\mathbb{T)}}=1$, the
Sidon constant of $E$ is at most $2$. In fact, the set $\{3^{n}\},$ or more
generally any lacunary set $\{n_{j}\}\subseteq \mathbb{Z}^{+}$ (meaning $%
\inf n_{j+1}/n_{j}=q>1$), is an example of an $I_{0}$ set (although the
Riesz product measure argument does not show this).

The class of $I_{0}$ sets is a proper subset of the Sidon sets since the
class of Sidon sets is closed under finite unions, but the class of $I_{0}$
sets is not. But $I_{0}$ sets are also plentiful and every infinite subset
of $\Gamma $ contains an infinite $I_{0}$ set. It is of interest to
understand the relationship between Sidon and $I_{0}$ sets since $I_{0}$
sets are known to not cluster at any continuous character in the Bohr
topology, while it is unknown whether Sidon sets can (even) be dense in the
Bohr compactification of $\Gamma $.

Another interesting class of Sidon sets are the $\varepsilon $-Kronecker
sets: $E$ $\subseteq \Gamma $ is $\varepsilon $-Kronecker if for every $\phi
:E\rightarrow \mathbb{T}$ there exists $x\in G$ such that $\left\vert \phi
(\gamma )-\gamma (x)\right\vert <\varepsilon $ for all $\gamma \in E$. Any
lacunary set $\{n_{j}\}$ with $\inf n_{j+1}/n_{j}>2$ is $\varepsilon $%
-Kronecker for some $\varepsilon <2$ and every set that is $(2-\varepsilon )$%
-Kronecker is Sidon \cite{HR}. There are examples of Sidon sets that are not 
$(2-\varepsilon )$-Kronecker for some groups $\Gamma ,$ but it is unknown if
such examples can be found in $\mathbb{Z}$.

A weakened version of independence is the following notion.

\begin{definition}
Let $n\in \mathbb{N}$. We say that $E\subseteq \Gamma $ is $n$\textbf{%
-degree independent} if whenever $k\in \mathbb{N}$, $\gamma _{1},...,\gamma
_{k}$ are distinct elements in $E$ and $m_{1},..,m_{k}$ are integers with $%
\left\vert m_{i}\right\vert \leq n$, then $\prod_{i=1}^{k}\gamma
_{i}^{m_{i}}=\mathbf{1}$ implies $\gamma _{i}^{m_{i}}=\mathbf{1}$ for all $%
i=1,...,k$ where $\mathbf{1}$ denotes the identity in $\Gamma $. A $1$%
-degree independent set is usually called \textbf{quasi-independent} and a $%
2 $-degree independent set is called \textbf{dissociate}.

The set $E$ is said to be $n$\textbf{-length independent} if whenever $%
\gamma _{1},...,\gamma _{n}$ are distinct elements in $E$ and $%
m_{1},..,m_{n}\in \{0,\pm 1\}$ then $\prod_{i=1}^{n}\gamma _{i}^{m_{i}}=%
\mathbf{1}$ implies $\gamma _{i}^{m_{i}}=\mathbf{1}$ for all $i$.
\end{definition}

\begin{remark}
Note that the condition $\gamma _{i}^{m_{i}}=\mathbf{1}$\textbf{\ }can be
replaced with $\gamma _{i}=\mathbf{1}$ in the case that all $\gamma _{i}$
have order greater than $n$.
\end{remark}

Clearly $n$-degree independent implies $n$-length independent and a set is
independent if and only if it is $n$-degree independent for every $n$. The
set $E=\{3^{n}\}$ is a dissociate set and a Riesz product construction shows
that any dissociate set is Sidon. A modification of this argument can be
given to show that quasi-independent sets are also Sidon.

Significant efforts have been made to characterize Sidon sets in terms of
these more restricted classes of sets. Towards this end, Malliavin \cite{Ma}
showed that any Sidon set not containing the identity, in the group $\oplus 
\mathbb{Z}_{p}$ for $p$ prime, is a finite union of independent sets, while
Bourgain \cite{Bo} proved that every Sidon set $E\subseteq \Gamma \backslash
\{\mathbf{1\}}$ is a finite union of $n$-length independent sets. However,
it is unknown if every Sidon set is:

\begin{itemize}
\item a finite union of $I_{0}$ sets

\item a finite union of $\varepsilon $-Kronecker sets

\item a finite union of quasi-independent sets
\end{itemize}

Pisier introduced probabilistic techniques to study these and related
questions and obtained important `proportional' characterizations of Sidon
sets (see Theorem \ref{Pisier} below). These characterizations inspired a
number of other such characterizations and is the motivation for this paper.
Here is a sampling of these `proportional' characterizations.

\textbf{Terminology:} Given two classes of sets $\mathcal{A}$, $\mathcal{B}$%
, we will say that $E\in \mathcal{A}$ is proportionally $\mathcal{B}$ if
there is some constant $\delta >0$ such that for every finite $F\subseteq E$
there is some $H\subseteq F$ such that $\left\vert H\right\vert \geq \delta
\left\vert F\right\vert $ and $H\in \mathcal{B}$.

\begin{theorem}
\label{Pisier}(a) The following are equivalent for $E\subseteq \Gamma
\backslash \{\mathbf{1}\}$:

\begin{enumerate}
\item $E$ is Sidon;

\item $E$ is proportionally quasi-independent;

\item There exists a constant $C$ such that $E$ is proportionally Sidon with
Sidon constant at most $C;$

\item There exists an integer $M$ such that $E$ is proportionally $I_{0}(M)%
\footnote{$E$ is $I_{0}(M)$ if for every $\phi \in $Ball$(\ell ^{\infty
}(E)) $ there is a discrete measure $\mu =\sum_{j=1}^{M}a_{j}\delta _{x_{j}}$
with $\left\vert a_{j}\right\vert \leq 1$ and $\sup_{\gamma \in E}\left\vert
\phi (\gamma )-\widehat{\mu }(\gamma )\right\vert \leq 1/2$.}.$
\end{enumerate}

(b) If $\Gamma $ has only finitely many elements of order $2^{k}$ for any $k$
and $E$ has no elements of order two, then $E$ is Sidon if and only if $E$
is proportional $\varepsilon $-Kronecker for some $\varepsilon <\sqrt{2}$.
\end{theorem}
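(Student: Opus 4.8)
The plan is to prove part~(a) by establishing $(1)\Leftrightarrow(2)$ together with $(1)\Rightarrow(4)\Rightarrow(3)\Rightarrow(2)$, which makes all four statements equivalent, and then to treat part~(b) separately. Everything is finitary: by Proposition~\ref{equiv}(4) the Sidon constant of $E$ is the supremum of the Sidon constants of its finite subsets, so each assertion in (a) is really a statement about finite $F\subseteq E$ with constants that must be tracked uniformly. The equivalence $(1)\Leftrightarrow(2)$ is Pisier's theorem \cite{P1} and $(1)\Rightarrow(4)$ (in a uniform sense) is Ramsey's theorem \cite{Ra}; I expect the probabilistic extraction underlying both to be the one real obstacle.

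For $(1)\Rightarrow(2)$ I would argue by random selection. Fix a finite $F\subseteq E\subseteq\Gamma\setminus\{\mathbf 1\}$ of Sidon constant at most $C$, and keep each $\gamma\in F$ independently with probability $p=p(C)$, obtaining $H\subseteq F$. With positive probability $|H|\ge p|F|/2$ by Chernoff, so it suffices to make $H$ quasi-independent after deleting few characters. The Sidon hypothesis enters through an upper bound, for each $s$, on the number of nontrivial relations $\prod_i\gamma_i^{m_i}=\mathbf 1$ with $\gamma_i\in F$, $|m_i|\le 1$, and support of size $s$; this is estimated using the $\Lambda(p)$ inequality $\|f\|_p\le C\sqrt{p}\,\|f\|_2$ for $F$-polynomials (equivalently, Rider/Drury-type $L^1$ bounds for the associated product functions). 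Weighting a relation of support size $s$ by $p^{s}$ and summing, the expected number of relations surviving in $H$ is less than $1$ once $p$ is small in terms of $C$ alone; deleting one character per surviving relation leaves a quasi-independent set still of size proportional to $|F|$. Since (a) allows torsion one must retain the relations of small support and those with a single $\gamma_i$, but $\mathbf 1\notin E$ and the bound on the number of low-order relations keep the sum finite.

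For the converse $(2)\Rightarrow(1)$ I would use the dual characterization: $E$ is Sidon iff there is $c>0$ with $\mathbb E_{\varepsilon}\bigl\|\sum_{\gamma\in F}\varepsilon_\gamma a_\gamma\gamma\bigr\|_\infty\ge c\sum_{\gamma\in F}|a_\gamma|$ for all finite $F\subseteq E$ and all scalars $a_\gamma$. Given proportional quasi-independence, greedily peel off quasi-independent subsets $H_1,H_2,\dots$ of $F$, each a proportional quasi-independent subset of what remains, so that $O(\log|F|)$ pieces exhaust $F$ and each $H_j$ carries a definite fraction of its own $\ell^1$ mass into the averaged sup norm (a Riesz-product estimate for quasi-independent sets); selecting the piece of largest mass and summing a geometric series recovers $\sum_{\gamma\in F}|a_\gamma|$ up to a constant. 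This closes $(1)\Leftrightarrow(2)$. The remaining implications are routine: $(1)\Rightarrow(4)$ is Ramsey \cite{Ra}; $(4)\Rightarrow(3)$ because an $I_0(M)$ set has, by the iteration remark after Proposition~\ref{equiv}, Sidon constant at most $M/(1-\tfrac12)=2M$, so a proportionally $I_0(M)$ set is proportionally Sidon with constant $\le 2M$; and $(3)\Rightarrow(2)$ by applying the finitary form of $(1)\Rightarrow(2)$ to the proportional Sidon subset supplied by (3) (a finite set that is Sidon with constant $\le C$ contains a quasi-independent subset of proportion $\delta=\delta(C)$).

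For part~(b), the reverse implication is quick: a set that is $\varepsilon$-Kronecker with $\varepsilon<\sqrt2$ is $(2-\delta)$-Kronecker with $\delta=2-\varepsilon>0$, hence Sidon with Sidon constant bounded in terms of $\varepsilon$ alone by \cite{HR}; so $E$ is proportionally Sidon with a uniform constant and therefore Sidon by $(3)\Rightarrow(2)\Rightarrow(1)$ of part~(a). The forward implication is the theorem of \cite{GH5}: extract, via part~(a), a quasi-independent subset of proportional size from any finite $F\subseteq E$, and then --- using that $\Gamma$ has only finitely many elements of order $2^k$ for each $k$ and $E$ none of order $2$ --- thin it by a further bounded proportion to a set that is $\varepsilon$-Kronecker for some fixed $\varepsilon<\sqrt2$, via the description of $\varepsilon$-Kronecker-ness (for such $\varepsilon$) as a simultaneous approximation property that independent-like subsets of these groups can be made to satisfy. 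Throughout, the probabilistic extraction in $(1)\Rightarrow(2)$ (and in Ramsey's proof of $(1)\Rightarrow(4)$) is the crux; the rest is either a citation or soft analysis.
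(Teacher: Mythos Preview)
The paper does not prove Theorem~\ref{Pisier}; it is quoted as background and simply attributed---(1)--(3) to Pisier \cite{P1}--\cite{P3} (with later proofs by Bourgain \cite{B1,B2}), (4) to Ramsey \cite{Ra}, and (b) to Graham--Hare \cite{GH5}---with pointers to the expositions in \cite{GHbook,LQ}. So there is no in-paper argument to compare your sketch against.

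That said, your sketch of $(2)\Rightarrow(1)$ contains a genuine gap. Greedy peeling produces $k\asymp \delta^{-1}\log|F|$ quasi-independent pieces $H_1,\dots,H_k$, and neither ``selecting the piece of largest mass'' nor ``summing a geometric series'' yields a bound independent of $|F|$: only the \emph{cardinalities} $|H_j|$ decay geometrically, not the masses $\sum_{\gamma\in H_j}|a_\gamma|$, and the largest-mass piece is only guaranteed $\ge k^{-1}\sum_{\gamma\in F}|a_\gamma|$. More fundamentally, a lower bound on $\mathbb{E}_\varepsilon\bigl\|\sum_{\gamma\in H}\varepsilon_\gamma a_\gamma\gamma\bigr\|_\infty$ for a subset $H\subset F$ does not transfer to $\mathbb{E}_\varepsilon\bigl\|\sum_{\gamma\in F}\varepsilon_\gamma a_\gamma\gamma\bigr\|_\infty$, since the $F\setminus H$ terms can cancel at the extremal point. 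Pisier's route avoids this by passing through the entropy characterization: a quasi-independent $H$ has $\varepsilon$-entropy (for the pseudometric $d_H(x,y)=\max_{\gamma\in H}|\gamma(x)-\gamma(y)|$) at least $2^{c|H|}$, and since $H\subseteq F$ implies $d_H\le d_F$, entropy is monotone, giving the uniform lower bound $2^{c\delta|F|}$ for $F$ directly. Your chain $(1)\Rightarrow(4)\Rightarrow(3)\Rightarrow(2)$ and your outlines for $(1)\Rightarrow(2)$ and for part~(b) are sound; note also that $(2)\Rightarrow(3)$ is immediate (quasi-independent sets are Sidon with a universal constant via Riesz products), so the substantive content of the cycle is precisely $(3)\Rightarrow(1)$---equivalently your $(2)\Rightarrow(1)$---which is the step that fails as written.
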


The equivalence of (1 - 3) is a deep result of Pisier, see \cite{P1}-\cite%
{P3}, with later proofs given by Bourgain in \cite{B1}, \cite{B2}. The
equivalence of (4) was shown by Ramsey in \cite{Ra}, while (b) was
established in \cite{GH5} along with other related proportional
equivalences. We refer the reader also to \cite[ch. 7,9]{GHbook} and \cite[%
p. 482-499]{LQ} for expositions of these results.

In this paper, we will modify Pisier's technique to prove that if $\Gamma $
has no elements of finite order, then $E$ is Sidon if and only if $E$ is
proportionally $n$-degree independent for each $n$ if and only for every
constant $C>1,$ $E$ is proportionally Sidon with Sidon constant $C$. Partial
results are obtained in the case that $\Gamma $ has elements of finite order.

\section{Proportional Sidon subsets in torsion-free groups}

In this section our main focus will be on torsion-free, discrete abelian
groups $\Gamma ,$ groups which have no elements of finite order. These are
the groups whose dual groups $G$ are connected. We will first extend
Pisier's proportional quasi-independent characterization of Sidon to $n$%
-degree independence, and then use this to deduce that Sidon sets are
proportionally Sidon with constants arbitrarily close to $1$.

We begin with some preliminary lemmas that hold for general discrete abelian
groups.

\begin{lemma}
\label{1}Suppose $E\subseteq \Gamma \diagdown \{\mathbf{1}\}$ is Sidon.
There is a constant $K$, depending only on the Sidon constant of $E,$ such
that for all finite subsets $A\subseteq E$ and real numbers $(a_{\gamma
})_{\gamma \in A},$ we have 
\begin{equation*}
\int \exp \left( \sum_{\gamma \in A}a_{\gamma }\mathcal{R}(\gamma )\right)
\leq \exp \left( K\sum_{\gamma \in A}\left\vert a_{\gamma }\right\vert
^{2}\right) .
\end{equation*}
\end{lemma}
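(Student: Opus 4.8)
The plan is to exploit the Sidon property of $E$ to move the estimate to a context where the characters behave like independent random variables. The key observation is that the function $f = \sum_{\gamma \in A} a_\gamma \mathcal{R}(\gamma)$ is a real trigonometric polynomial with spectrum in $E \cup E^{-1} \cup \{\mathbf{1}\}$, and one wants a subgaussian bound on $\int e^{f}$ in terms of $\sum a_\gamma^2$. First I would recall the dual formulation of Sidonicity: by item (4) of Proposition \ref{equiv}, $\sum_{\gamma}|\widehat{g}(\gamma)| \le C\|g\|_\infty$ for $g$ with $\mathrm{supp}\,\widehat{g}\subseteq E$; equivalently, there is a constant (essentially the Sidon constant) bounding the projection onto $E$ in an appropriate sense. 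The standard route, going back to Pisier and Rudin, is that Sidonicity of $E$ is equivalent to the existence of a constant $B$ such that for all real coefficients $(a_\gamma)$,
\[
\Big\| \sum_{\gamma \in A} a_\gamma \mathcal{R}(\gamma) \Big\|_{\psi_2} \le B \Big( \sum_{\gamma \in A} |a_\gamma|^2 \Big)^{1/2},
\]
where $\|\cdot\|_{\psi_2}$ is the Orlicz norm for $\psi_2(t)=e^{t^2}-1$; this is precisely the $\Lambda(p)$ growth condition $\|f\|_p \le B\sqrt{p}\,\|f\|_2$ for all $p$, uniformly, which characterizes Sidon sets (Rudin/Pisier). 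The subgaussian norm bound is exactly what yields the claimed exponential-moment inequality with $K$ a fixed multiple of $B^2$.

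The key steps, in order, are: (i) recall or cite that a Sidon set with Sidon constant $S$ satisfies $\|f\|_{2k} \le c\sqrt{k}\,S\,\|f\|_2$ for all $k$, where $f$ ranges over trigonometric polynomials with spectrum in $E\cup E^{-1}$ (this follows from the Sidon property together with Khintchine-type arithmetic, or is quoted directly from \cite{LR} or \cite{GHbook}); (ii) observe $\|f\|_2^2 = \frac{1}{2}\sum_{\gamma\in A}|a_\gamma|^2$ for $f = \sum a_\gamma \mathcal{R}(\gamma)$ (assuming the $\gamma$ are distinct and not equal to $\mathbf{1}$, so the real parts are orthonormal up to the factor $1/2$ and no collisions $\gamma = \gamma'^{-1}$ occur — here torsion-freeness is not yet needed, only $\mathbf{1}\notin E$; a collision $\gamma = \gamma^{-1}$ would force $\gamma^2 = \mathbf{1}$, handled by the hypothesis $E \subseteq \Gamma\setminus\{\mathbf 1\}$ or absorbed into the constant); (iii) expand $\int e^{f} = \sum_{k\ge 0} \frac{1}{k!}\int f^k$, bound $\int f^k \le \|f\|_k^k$ for $k$ even and use $|\int f^k| \le \|f\|_k^k$ via the preceding for odd $k$ too, then insert the $\Lambda(k)$ estimate to get $\int f^k \le (c S)^k k^{k/2} \big(\sum|a_\gamma|^2\big)^{k/2}$; (iv) sum the series using $k^{k/2}/k! \le (e^{1/2}/\sqrt{k})^k \cdot$(Stirling), so that $\sum_k \frac{1}{k!}(cS)^k k^{k/2} M^{k/2}$ converges and is dominated by $\exp(K M)$ with $M = \sum |a_\gamma|^2$ and $K$ depending only on $S$.

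The main obstacle is step (iii)–(iv): controlling the odd moments and summing the series cleanly to land exactly on an expression of the form $\exp(K\sum|a_\gamma|^2)$ rather than something like $C\exp(K\sum|a_\gamma|^2)$. This is handled by a standard trick: either absorb the constant by noting that for $\sum|a_\gamma|^2$ small the left side is close to $1$ and bounding crudely there, while for $\sum|a_\gamma|^2$ large the constant is swallowed by increasing $K$; or, more cleanly, use the two-sided bound $\int \cosh(f) \le \exp(K\|f\|_2^2)$ which follows because only even powers appear in $\cosh$, and then relate $\int e^f$ to $\int\cosh(f) + \int\sinh(f)$ with the odd part controlled by Cauchy–Schwarz against the even part. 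I would present the argument via the $\psi_2$-norm characterization, since the inequality $\int e^f \le e^{K\|f\|_{\psi_2}^2}$ for mean-zero-ish $f$ is essentially the definition of the Orlicz norm up to a universal constant, and then cite that Sidon sets are $\psi_2$-sets (sometimes called "$\Lambda(\psi_2)$" or subgaussian) with constant controlled by the Sidon constant, a result found in the references already given. The only genuine care needed is that $f$ here is real and its square-integral is the stated $\frac12\sum|a_\gamma|^2$; the rest is bookkeeping to produce a single constant $K = K(S)$.
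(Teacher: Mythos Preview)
Your proposal is correct and follows essentially the same route as the paper: the paper's proof sketch says to expand the exponential as a power series and invoke the Rudin--Pisier $\Lambda(p)$ inequality $\|f\|_p \le 2S\sqrt{p}\,\|f\|_2$ for $E$-polynomials, which is precisely your steps (i)--(iv). Your additional remarks about handling odd moments, possible collisions $\gamma=\gamma'^{-1}$, and absorbing multiplicative constants into $K$ are the sort of bookkeeping the paper leaves to the reader.
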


\begin{proof}
This is well known and is a straightforward argument using the power series
expansion of the exponential function and the fact that if $E$ is a Sidon
set with Sidon constant $S$, then%
\begin{equation*}
\left\Vert f\right\Vert _{p}\leq 2S\sqrt{p}\left\Vert f\right\Vert _{2}
\end{equation*}%
for any integer $p\geq 2$ and trigonometric polynomial $f$ with supp$%
\widehat{f}\subseteq E$. The details are left to the reader.
\end{proof}

\textbf{Notation}: Given $E\subseteq \Gamma $ and $k\in \mathbb{N},$ we let 
\begin{equation*}
E_{k}=\{\gamma ^{k}:\gamma \in E\}.
\end{equation*}

\begin{lemma}
\label{2}Let $n\in \mathbb{N}$ and assume $\Gamma $ contains no non-trivial
elements of order $\leq n$. Suppose $E\subseteq \Gamma \diagdown \{\mathbf{1}%
\}$ and $E_{k}$ is Sidon for each $k=1,...,n$. Then there is a constant $%
K_{n},$ depending only on $n$ and the Sidon constants of the sets $E_{k},$ $%
k=1,...,n,$ such that for all $0<\lambda <1/n$ and finite subsets $%
A\subseteq E$, we have%
\begin{equation*}
\int \prod_{\gamma \in A}\left( 1+\lambda \sum_{k=1}^{n}\mathcal{R(}\gamma
^{k})\right) \leq \exp \left( K_{n}\left\vert A\right\vert n^{3}\lambda
^{2}\right) .
\end{equation*}%
(Here $\left\vert A\right\vert $ denotes the cardinality of the set $A$.)
\end{lemma}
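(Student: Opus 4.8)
The natural approach is to mimic the standard Riesz-product estimate that underlies Lemma~\ref{1}, but to handle the "thickened" factors $1+\lambda\sum_{k=1}^{n}\mathcal{R}(\gamma^{k})$. The first step is to replace the product by an exponential: since for each $\gamma\in A$ we have $\bigl|\lambda\sum_{k=1}^{n}\mathcal{R}(\gamma^{k})\bigr|\le n\lambda<1$, we may write $1+t\le e^{t}$ (valid for all real $t$) to get
\begin{equation*}
\prod_{\gamma\in A}\Bigl(1+\lambda\sum_{k=1}^{n}\mathcal{R}(\gamma^{k})\Bigr)
\le \exp\Bigl(\lambda\sum_{\gamma\in A}\sum_{k=1}^{n}\mathcal{R}(\gamma^{k})\Bigr)
=\exp\Bigl(\sum_{k=1}^{n}\lambda\sum_{\gamma\in A}\mathcal{R}(\gamma^{k})\Bigr).
\end{equation*}
One then wants to bound $\int\exp\bigl(\sum_{k}\lambda\sum_{\gamma\in A}\mathcal{R}(\gamma^{k})\bigr)$. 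The obvious tool is Hölder's inequality with $n$ exponents all equal to $n$, which reduces the estimate to bounding each $\int\exp\bigl(n\lambda\sum_{\gamma\in A}\mathcal{R}(\gamma^{k})\bigr)$ separately. Here is where the hypothesis that $E_{k}$ is Sidon enters: the set $\{\gamma^{k}:\gamma\in A\}$ is a subset of $E_{k}$, and — crucially, because $\Gamma$ has no nontrivial elements of order $\le n$ — the map $\gamma\mapsto\gamma^{k}$ is injective on $A$ for each $k\le n$, so these are $|A|$ distinct characters. Applying Lemma~\ref{1} to the Sidon set $E_{k}$ (with Sidon constant $S_{k}$, hence constant $K(S_{k})$) and coefficients $a_{\gamma^{k}}=n\lambda$ gives
\begin{equation*}
\int\exp\Bigl(n\lambda\sum_{\gamma\in A}\mathcal{R}(\gamma^{k})\Bigr)
\le \exp\bigl(K(S_{k})\,|A|\,n^{2}\lambda^{2}\bigr).
\end{equation*}

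**Assembling the bound.** Combining the Hölder step ($\int\prod_{k}g_{k}\le\prod_{k}\|g_{k}\|_{n}$ with $g_{k}=\exp(\lambda\sum_{\gamma}\mathcal{R}(\gamma^{k}))$, so $\|g_{k}\|_{n}^{n}=\int\exp(n\lambda\sum_{\gamma}\mathcal{R}(\gamma^{k}))$) with the previous display yields
\begin{equation*}
\int\prod_{\gamma\in A}\Bigl(1+\lambda\sum_{k=1}^{n}\mathcal{R}(\gamma^{k})\Bigr)
\le\prod_{k=1}^{n}\exp\bigl(\tfrac{1}{n}K(S_{k})\,|A|\,n^{2}\lambda^{2}\bigr)
=\exp\Bigl(|A|\,n\lambda^{2}\sum_{k=1}^{n}K(S_{k})\Bigr).
\end{equation*}
Setting $K_{n}=\max_{1\le k\le n}K(S_{k})$ we get the bound $\exp\bigl(|A|\,n^{2}\lambda^{2}K_{n}\bigr)$, which is even a touch better than the stated $\exp(K_{n}|A|n^{3}\lambda^{2})$; so the claimed inequality follows with room to spare, and $K_{n}$ depends only on $n$ and the Sidon constants $S_{1},\dots,S_{n}$ as required. (If one prefers to keep $\mathcal{R}(\gamma^{k})=\mathcal{R}(\gamma^{-k})$ issues out of sight, note $\gamma^{k}=\mathbf 1$ is impossible for $\gamma\in E$, $k\le n$, by the torsion hypothesis, so every $\gamma^{k}$ genuinely lies in $\Gamma\setminus\{\mathbf 1\}$ and Lemma~\ref{1} applies verbatim.)

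**Where the difficulty lies.** The computation itself is routine once the structure is right; the one point that needs genuine care is the injectivity of $\gamma\mapsto\gamma^{k}$ on $A$ and, relatedly, the fact that Lemma~\ref{1} requires the coefficients to be indexed by \emph{distinct} elements of a Sidon set. Both are exactly what the hypothesis "$\Gamma$ contains no non-trivial elements of order $\le n$" buys us: if $\gamma_{1}^{k}=\gamma_{2}^{k}$ with $\gamma_{1}\ne\gamma_{2}$ in $E\subseteq\Gamma$, then $(\gamma_{1}\gamma_{2}^{-1})^{k}=\mathbf 1$ with $\gamma_{1}\gamma_{2}^{-1}\ne\mathbf 1$, forcing a nontrivial element of order dividing $k\le n$. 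So the only real content is bookkeeping: tracking that the constant depends only on the allowed data, and verifying the Hölder exponents balance to produce a bound of the form $|A|$ times a fixed polynomial in $n$ times $\lambda^{2}$. No probabilistic input beyond Lemma~\ref{1} is needed.
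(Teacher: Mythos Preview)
Your argument is correct. It differs from the paper's proof in one structural choice: after the common first step $1+t\le e^{t}$, the paper pools all the powers together, writing
\[
\sum_{\gamma\in A}\sum_{k=1}^{n}\mathcal{R}(\gamma^{k})=\sum_{\beta\in A^{(n)}}a_{\beta}\mathcal{R}(\beta),\qquad A^{(n)}=\bigcup_{k=1}^{n}A_{k},
\]
invokes the fact that a finite union of Sidon sets is Sidon (with controlled constant), and applies Lemma~\ref{1} once to $A^{(n)}$; the torsion hypothesis is used to bound the multiplicities by $a_{\beta}\le 2n$, which together with $|A^{(n)}|\le n|A|$ produces the $n^{3}$ in the exponent. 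You instead decouple the powers with H\"older's inequality (exponents all equal to $n$) and apply Lemma~\ref{1} separately to each $E_{k}$. This sidesteps both the union-of-Sidon-sets lemma and the multiplicity bookkeeping, at the price of the extra H\"older step, and yields the slightly sharper exponent $n^{2}\lambda^{2}$ in place of $n^{3}\lambda^{2}$. Either route is fine for the downstream application in Proposition~\ref{qi}, where only the form $C_{n}|A|\lambda^{2}$ matters.
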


\begin{proof}
Let $A\subseteq E$ be finite. Since $\left\vert \sum_{k=1}^{n}\mathcal{R(}%
\gamma ^{k})\right\vert \leq n$ and $\lambda <1/n$, we have 
\begin{equation*}
\prod_{\gamma \in A}\left( 1+\lambda \sum_{k=1}^{n}\mathcal{R(}\gamma
^{k})\right) \leq \exp \left( \lambda \sum_{\gamma \in A}\sum_{k=1}^{n}%
\mathcal{R(}\gamma ^{k})\right) .
\end{equation*}%
Put $A^{(n)}=\bigcup_{k=1}^{n}A_{k}$. We can write 
\begin{equation*}
\sum_{\gamma \in A}\sum_{k=1}^{n}\mathcal{R(}\gamma ^{k})=\sum_{\beta \in
A^{(n)}}a_{\beta }\mathcal{R}(\beta ).
\end{equation*}%
Note that the coefficients $a_{\beta }$ satisfy $0\leq a_{\beta }\leq 2n$
since the assumption that $\Gamma $ contains no elements of order $\leq n$
ensures that $\mathcal{R(}\gamma ^{k})=\mathcal{R(}\chi ^{k})$ for $\gamma
,\chi \in A$ and $k\leq n$ only if $\gamma =\chi $ or $\overline{\chi }$.

Since a finite union of Sidon sets is Sidon with Sidon constant depending
only on the Sidon constants of the individual sets and the number of sets in
the union, $A^{(n)}$ is Sidon with Sidon constant depending only on that of
the sets $E_{k}$ and $n$. Thus Lemma \ref{1} and the fact that $\left\vert
A^{(n)}\right\vert \leq n\left\vert A\right\vert $ implies that there is a
constant $k_{n}$ with 
\begin{eqnarray*}
\int \exp \left( \lambda \sum_{\gamma \in A}\sum_{k=1}^{n}\mathcal{R(}\gamma
^{k})\right) &=&\int \exp \left( \lambda \sum_{\beta \in A^{(n)}}a_{\beta }%
\mathcal{R}(\beta )\right) \\
&\leq &\exp \left( k_{n}\sum_{\beta \in A^{(n)}}\lambda ^{2}a_{\beta
}^{2}\right) \leq \exp \left( 4k_{n}n^{3}\lambda ^{2}\left\vert A\right\vert
\right) .
\end{eqnarray*}
\end{proof}

Next, we upgrade Pisier's proportional quasi-independent characterization of
Sidon to $n$-degree independent proportional sets. Our proof follows his
strategy.

\begin{proposition}
\label{qi}Let $n\in \mathbb{N}$ and assume $\Gamma $ contains no non-trivial
elements of order $\leq n$. Suppose $E\subseteq \Gamma \diagdown \{\mathbf{1}%
\}$ and $E_{k}$ is Sidon for each $k=1,...,n$. There exists $\delta _{n}>0$
such that for each finite set $F\subseteq E$ there is a further finite
subset $H\subseteq F$ which is $n$-degree independent and satisfies $%
\left\vert H\right\vert \geq \delta _{n}\left\vert F\right\vert .$
\end{proposition}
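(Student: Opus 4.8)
The plan is to follow Pisier's probabilistic selection argument, now powered by the refined exponential estimate of Lemma~\ref{2}. Fix a finite set $F\subseteq E$ with $|F|=N$. For a parameter $0<\lambda<1/n$ to be chosen, form the Riesz-type product $P=\prod_{\gamma\in F}\bigl(1+\lambda\sum_{k=1}^{n}\mathcal R(\gamma^{k})\bigr)$, which is nonnegative, and observe that $\int P=1+\sum_{\omega}c(\omega)$ where $\omega$ ranges over the nontrivial ``relations'' $\prod_{\gamma\in F}\gamma^{m_\gamma}=\mathbf 1$ with $|m_\gamma|\le n$ arising from expanding the product, and each surviving coefficient $c(\omega)$ is of size at least $(\lambda/2)^{\ell(\omega)}$ in absolute value, where $\ell(\omega)=\#\{\gamma:\gamma^{m_\gamma}\ne\mathbf 1\}$ is the number of genuinely participating characters (the torsion-free hypothesis, or the order $>n$ hypothesis, guarantees $\gamma^{m_\gamma}\ne\mathbf 1$ unless $m_\gamma=0$, so $\ell(\omega)\ge 2$ always). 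On the other hand, Lemma~\ref{2} gives $\int P\le\exp(K_n N n^3\lambda^2)$.

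Next I would run the standard deletion/counting step. Call a subset $H\subseteq F$ \emph{$n$-degree independent} exactly when no nontrivial relation $\omega$ is supported inside $H$. The heart of the argument is to bound the number of relations of a given length: for each $\ell\ge 2$, the number of relations $\omega$ with $\ell(\omega)=\ell$ whose support lies in $F$ is at most $\binom{N}{\ell}(2n+1)^{\ell}$, trivially, but we need something comparable in strength to a per-character bound. The cleaner route, which is Pisier's, is to compare: summing $|c(\omega)|$ over all relations and using $\int P\le e^{K_nNn^3\lambda^2}$ together with positivity of $\int P$, one extracts that the total ``mass'' of relations is controlled; then one shows that if $|F|$ contained no large $n$-degree independent subset, the relations would be so numerous that $\int P$ would be forced above $\exp(K_nNn^3\lambda^2)$, a contradiction. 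Concretely, I would argue the contrapositive: suppose every $H\subseteq F$ with $|H|\ge\delta N$ fails to be $n$-degree independent; then a greedy/maximal argument produces many disjoint-ish minimal relations, and plugging a lower bound for $\int P$ built from the corresponding monomials against the upper bound from Lemma~\ref{2} pins down $\delta$ in terms of $n$, $K_n$, and the Sidon constants of $E_1,\dots,E_n$ only.

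To make the two estimates meet, choose $\lambda$ a small absolute multiple of $1/n^2$ (so that $\lambda<1/n$ and the exponent $K_nNn^3\lambda^2$ is a controlled multiple of $N$), and then optimize: the lower bound on $\int P$ coming from $r$ minimal relations each of length $\le\ell_0$ is roughly $(\lambda/2)^{\ell_0 r}$ times a multiplicity, while the number of possible relations of length $\le\ell_0$ in $F$ is at most $N^{\ell_0}(2n+1)^{\ell_0}$; balancing these against $e^{O(N/n)}$ yields $r\le cN$ with $c<1$ depending only on $n$ and the Sidon constants, hence a surviving $n$-degree independent $H$ with $|H|\ge(1-c)N=:\delta_n N$.

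The main obstacle, and the place requiring the most care, is the combinatorial bound on the number and size of the relations $\omega$ and the verification that distinct relations contribute to \emph{distinct} monomials $\beta\in\bigcup_k F_k$-products so that no cancellation occurs among the $c(\omega)$ --- this is exactly where the hypothesis ``$\Gamma$ has no non-trivial elements of order $\le n$'' is used (as in the coefficient bound $0\le a_\beta\le 2n$ in Lemma~\ref{2}), and getting the constants to depend only on $n$ and the Sidon constants of $E_1,\dots,E_n$, rather than on $|F|$, is the delicate point. Everything else is bookkeeping with the exponential estimate.
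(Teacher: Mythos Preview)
Your proposal has a genuine gap: you are trying to run a direct contrapositive on the full set $F$, and this misses the two-step structure that actually makes Pisier's argument go through.

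First, the paper does \emph{not} bound $\int P$ for the full product over $F$ and then argue by contradiction. Instead it introduces independent $\{0,1\}$-valued selectors $\varepsilon_\gamma$ with $\mathbb{P}(\varepsilon_\gamma=1)=\lambda/2$, sets $F(\omega)=\{\gamma:\varepsilon_\gamma(\omega)=1\}$, and observes that
\[
\mathbb{E}\,\mathcal{C}_n\bigl(F(\omega)\bigr)
=\int_G \prod_{\gamma\in F}\Bigl(1+\lambda\sum_{k=1}^{n}\mathcal{R}(\gamma^k)\Bigr)
\le \exp\bigl(K_n n^3\lambda^2|F|\bigr),
\]
where $\mathcal{C}_n(H)$ is the number of relation vectors $(\xi_\gamma)\in\{0,\pm1,\dots,\pm n\}^{H}$ with $\prod\gamma^{\xi_\gamma}=\mathbf{1}$. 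Markov plus Chebyshev then produce an $\omega$ for which simultaneously $|F(\omega)|\ge(\lambda/4)|F|$ and $\mathcal{C}_n(F(\omega))\le 2\cdot 2^{\alpha|F(\omega)|}$ with $\alpha<1$. This random thinning is the step that converts Lemma~\ref{2} into a \emph{count} on relations in a subset; your outline never passes to a random subset, so you never obtain any bound of the form $\mathcal{C}_n\le 2^{\alpha|H|}$.

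Second, your extraction step (``greedy/maximal argument produces many disjoint-ish minimal relations\dots balancing $(\lambda/2)^{\ell_0 r}$ against $e^{O(N/n)}$'') does not work as written. There is no a priori bound $\ell_0$ on the length of a minimal relation, so a greedy deletion of relation supports may terminate after a single step, and the quantity $(\lambda/2)^{\ell_0 r}$ has no usable lower bound. The paper instead proves a purely combinatorial claim: if $\mathcal{C}_n(F)\le 2\cdot 2^{\alpha|F|}$ with $\alpha<1$, then some $H_1\subseteq F$ with $|H_1|=|F|/2$ has its maximal $n$-relation subset $\mathcal{M}(H_1)$ of size at most $\theta|H_1|$, for $\theta<1$ depending only on $\alpha$. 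This is shown by counting: if every half-sized $H_1$ had $|\mathcal{M}(H_1)|>\theta|H_1|$, then grouping the $\binom{|F|}{|F|/2}$ half-sized subsets by their $\mathcal{M}(H_1)$ and bounding each fibre by $\binom{|F|}{|F|(1-\theta)/2}\le 2^{s(\theta)|F|}$ forces $\mathcal{C}_n(F)\ge 2^{(1-s(\theta))|F|}$, a contradiction once $1-s(\theta)>\alpha$. Then $H_1\setminus\mathcal{M}(H_1)$ is the desired $n$-degree independent set. Neither the random selection nor this pigeonhole-on-maximal-relation-sets argument appears in your outline, and I do not see how to close the gap without them.
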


\begin{proof}
Fix an integer $n$. For a finite subset $F\subseteq E$, let $\mathcal{C}%
_{n}(F)$ denote the cardinality of the set 
\begin{equation*}
\{(\xi _{\gamma })_{\gamma \in F}\in \{0,\pm 1,...,\pm n\}^{F}:\prod_{\gamma
\in F}\gamma ^{\xi _{\gamma }}=1\}.
\end{equation*}

The first step of the proof is to show that there are constants $\delta
=\delta _{n},\alpha =\alpha _{n}>0$ such that for each finite $F\subseteq E,$
there is a further subset $H\subseteq F$ with $\left\vert H\right\vert \geq
\delta \left\vert F\right\vert $ and $\mathcal{C}_{n}(H)\leq 2\cdot
2^{\alpha \left\vert H\right\vert }$. To see this, fix such $F$ and let $%
\lambda \in (0,1/n)$. Let $(\varepsilon _{\gamma })_{\gamma \in F}$ be a
collection of independent $0,1$-valued random variables on a probability
space $(\Omega ,\mathbb{P})$ such that $\mathbb{P}\{\varepsilon _{\gamma
}=1\}=\lambda /2$. An application of Fubini's theorem, independence and
Lemma \ref{2} gives%
\begin{eqnarray*}
\mathbb{E}\int \prod_{\gamma \in F}\left( 1+\varepsilon _{\gamma
}\sum_{k=1}^{n}(\gamma ^{k}+\gamma ^{-k})\right) &=&\int \mathbb{E}%
\prod_{\gamma \in F}\left( 1+\varepsilon _{\gamma }\sum_{k=1}^{n}(\gamma
^{k}+\gamma ^{-k})\right) \\
&=&\int \prod_{\gamma \in F}\left( 1+\lambda \sum_{k=1}^{n}\mathcal{R(}%
\gamma ^{k})\right) \\
&\leq &\exp \left( K_{n}n^{3}\lambda ^{2}\left\vert F\right\vert \right) .
\end{eqnarray*}%
If we let $F(\omega )=\{\gamma \in F:\varepsilon _{\gamma }(\omega )=1\},$
then%
\begin{equation*}
\mathbb{E}(\mathcal{C}_{n}(F(\omega )))=\mathbb{E}\int \prod_{\gamma \in
F}\left( 1+\varepsilon _{\gamma }\sum_{k=1}^{n}(\gamma ^{k}+\gamma
^{-k})\right) \leq \exp \left( K_{n}n^{3}\lambda ^{2}\left\vert F\right\vert
\right) .
\end{equation*}%
By Markov's inequality, with probability at least $1/2$ we have 
\begin{equation*}
\mathcal{C}_{n}(F(\omega ))\leq 2\exp \left( K_{n}n^{3}\lambda
^{2}\left\vert F\right\vert \right) .
\end{equation*}

Notice that if $\gamma _{1}\neq \gamma _{2}$, then $\mathbb{E((\varepsilon }%
_{\gamma _{1}}-\mathbb{E\varepsilon }_{\gamma _{1}})(\varepsilon _{\gamma
_{2}}-\mathbb{E\varepsilon }_{\gamma _{2}}))=0$, thus%
\begin{eqnarray*}
\mathbb{E}\left( \left\vert F(\omega )\right\vert -\mathbb{E}\left\vert
F(\omega )\right\vert \right) ^{2} &=&\mathbb{E}\left( \sum_{\gamma \in
F}(\varepsilon _{\gamma }-\mathbb{E\varepsilon }_{\gamma })\right) ^{2} \\
&=&\sum_{\gamma \in F}\mathbb{E}(\varepsilon _{\gamma }-\mathbb{E\varepsilon 
}_{\gamma })^{2} \\
&=&\left\vert F\right\vert (\lambda /2-\lambda ^{2}/4)\leq \left\vert
F\right\vert \lambda /2.
\end{eqnarray*}%
Since, also, $\mathbb{E(}\left\vert F(\omega )\right\vert )=\left\vert
F\right\vert \lambda /2$, it follows from Chebyshev's inequality that 
\begin{eqnarray*}
\mathbb{P\{}\left\vert F(\omega )\right\vert &\leq &\left\vert F\right\vert
\lambda /4\}\leq \mathbb{P}\left\{ \left( \left\vert F(\omega )\right\vert -%
\mathbb{E}\left\vert F(\omega )\right\vert \right) ^{2}\geq \left\vert
F\right\vert ^{2}\lambda ^{2}/16\right\} \\
&\leq &\frac{\left\vert F\right\vert \lambda /2}{\left\vert F\right\vert
^{2}\lambda ^{2}/16}=\frac{8}{\left\vert F\right\vert \lambda }.
\end{eqnarray*}

Choose $\lambda =\lambda _{n}>0$ so small that $\exp (4K_{n}n^{3}\lambda )<2$
and let $\alpha \in (0,1)$ be given by $2^{\alpha }=\exp (4K_{n}n^{3}\lambda
)$. With this choice of $\lambda $, $\mathbb{P\{}$ $\left\vert F(\omega
)\right\vert >\left\vert F\right\vert \lambda /4\}>1/2$ if $\left\vert
F\right\vert $ is sufficiently large, and for any such $\omega $, 
\begin{equation*}
2\cdot 2^{\alpha \left\vert F(\omega )\right\vert }=2\exp
(4K_{n}n^{3}\lambda \left\vert F(\omega )\right\vert )\geq 2\exp
(K_{n}n^{3}\lambda ^{2}|F|).
\end{equation*}%
Thus $\left\vert F(\omega )\right\vert >\left\vert F\right\vert \lambda /4$
and $\mathcal{C}_{n}(F(\omega ))\leq 2\cdot 2^{\alpha \left\vert F(\omega
)\right\vert }$ with positive probability.

This proves that there are constants $\delta =\lambda /4$ and $0<\alpha <1$
such that for any finite $F\subseteq E$ there is a subset $H=F(\omega
)\subseteq E$ with $\left\vert H\right\vert \geq \delta \left\vert
F\right\vert $ and $\mathcal{C}_{n}(H)\leq 2\cdot 2^{\alpha \left\vert
H\right\vert }$.

We will say a finite set $A\subseteq \Gamma $ is an $n$-relation set if
there exists $(\xi _{\gamma })_{\gamma \in A}\in \{\pm 1,...,\pm n\}^{A}$
with $\prod_{\gamma \in A}\gamma ^{\xi _{\gamma }}=1$. Given $A,$ we let $%
\mathcal{M}(A)$ denote a maximal (with respect to inclusion) subset of $A$
that is an $n$-relation set. The maximality ensures that $A\diagdown 
\mathcal{M}(A)$ is an $n$-degree independent set. To complete the proof of
the proposition, we will establish the following:

\textit{Claim}: Given a sufficiently large finite set $F$ satisfying $%
\mathcal{C}_{n}(F)\leq 2\cdot 2^{\alpha \left\vert F\right\vert }$ for some $%
\alpha >0,$ there exists a constant $0<\theta <1,$ depending only on $\alpha 
$, and a subset $H_{1}\subseteq F$ with $\left\vert H_{1}\right\vert \geq
\left\vert F\right\vert /2$ and having $\left\vert \mathcal{M}%
(H_{1})\right\vert \leq \theta \left\vert H_{1}\right\vert $.

Of course, in this case $H=H_{1}\diagdown \mathcal{M}(H_{1})$ is an $n$%
-degree independent subset of $F$ with cardinality at least $(1-\theta
)\left\vert F\right\vert /2$.

A technical fact we will use in proving the claim is that if we let 
\begin{equation*}
s(\theta )=\left( \frac{1-\theta }{2}\right) \log _{2}\left( \frac{2e}{%
1-\theta }\right) \text{ for }\theta \in (0,1),
\end{equation*}%
then, since $\binom{n}{k}\leq \left( \frac{ne}{k}\right) ^{k}$, we have 
\begin{equation*}
\binom{n}{\frac{n(1-\theta )}{2}}\leq 2^{s(\theta )n}.
\end{equation*}

Assume the claim is false. Then whenever $H_{1}$ is a subset of $F$ with $%
\left\vert H_{1}\right\vert =\left\vert F\right\vert /2$, (without loss of
generality we can assume $F$ has an even number of elements) we must have $%
\left\vert \mathcal{M}(H_{1})\right\vert >\theta \left\vert H_{1}\right\vert 
$.

As $\lim_{\theta \rightarrow 1}s(\theta )=0,$ we can choose $\theta $
sufficiently close to $1$ that $1-s(\theta )>\alpha $. A combinatorial
argument shows that if $H_{0}\subseteq F$ and $\theta \left\vert
F\right\vert /2<\left\vert H_{0}\right\vert <\left\vert F\right\vert /2$,
then the number of subsets $H_{1}\subseteq F$ containing $H_{0}$ and having
cardinality $\left\vert F\right\vert /2$ is 
\begin{eqnarray}
\left\vert \{H_{1}\subseteq F:\left\vert H_{1}\right\vert =\left\vert
F\right\vert /2,H_{1}\supseteq H_{0}\}\right\vert  &\leq &\binom{\left\vert
F\right\vert -\left\vert H_{0}\right\vert }{\left\vert F\right\vert
/2-\left\vert H_{0}\right\vert }  \notag \\
&\leq &\binom{\left\vert F\right\vert }{\left\vert F\right\vert (1-\theta )/2%
}\leq 2^{s(\theta )\left\vert F\right\vert }\text{.}  \label{bound}
\end{eqnarray}

We let $\mathcal{F}$ denote the collection of all subsets $H_{0}\subseteq F$
such that there exists $H_{1}\subseteq F$ with $\left\vert H_{1}\right\vert
=\left\vert F\right\vert /2$ and $\mathcal{M}(H_{1})=H_{0}$. Of course, $%
\mathcal{C}_{n}(F)\geq \left\vert \mathcal{F}\right\vert $. Thus%
\begin{eqnarray*}
\binom{\left\vert F\right\vert }{\left\vert F\right\vert /2} &=&\left\vert
\{H_{1}\subseteq F:\left\vert H_{1}\right\vert =\left\vert F\right\vert
/2\}\right\vert \\
&=&\sum_{H_{0}\subseteq F}\left\vert \{H_{1}\subseteq F:\left\vert
H_{1}\right\vert =\left\vert F\right\vert /2,\mathcal{M}(H_{1})=H_{0}\}%
\right\vert \\
&\leq &\sum_{H_{0}\in \mathcal{F}}\left\vert \{H_{1}\subseteq F:\left\vert
H_{1}\right\vert =\left\vert F\right\vert /2,H_{1}\supseteq
H_{0}\}\right\vert \\
&\leq &\left\vert \mathcal{F}\right\vert 2^{\left\vert F\right\vert s(\theta
)}\leq \mathcal{C}_{n}(F)2^{\left\vert F\right\vert s(\theta )},
\end{eqnarray*}%
where the second inequality comes from (\ref{bound}). Since $1-s(\theta
)>\alpha ,$ this implies 
\begin{equation*}
\mathcal{C}_{n}(F)\geq \binom{\left\vert F\right\vert }{\left\vert
F\right\vert /2}2^{-\left\vert F\right\vert s(\theta )}\sim \frac{1}{\sqrt{%
\left\vert F\right\vert }}2^{\left\vert F\right\vert }2^{-\left\vert
F\right\vert s(\theta )}>2\cdot 2^{\alpha \left\vert F\right\vert }
\end{equation*}%
if $\left\vert F\right\vert $ is sufficiently large, and that is a
contradiction.
\end{proof}

\begin{lemma}
\label{div}Assume $\Gamma $ is a torsion-free group. If $E\subseteq \Gamma $
is a Sidon set, then for all positive integers $n,$ the set $E_{n}=\{\gamma
^{n}:\gamma \in E\}$ is also a Sidon set with the same Sidon constant as $E$.
\end{lemma}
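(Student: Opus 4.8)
The plan is to exploit the characterization of the Sidon constant via condition (4) of Proposition \ref{equiv}, namely that $S$ is the least constant with $\sum_{\chi}|\widehat f(\chi)|\le S\|f\|_\infty$ for all trigonometric polynomials $f$ with $\operatorname{supp}\widehat f\subseteq E$. I would fix a trigonometric polynomial $g$ with $\operatorname{supp}\widehat g\subseteq E_n$, say $g=\sum_{\gamma\in A}c_\gamma\gamma^n$ for a finite $A\subseteq E$, and produce from it a polynomial $f$ supported on $E$ with the same coefficient $\ell^1$-norm and with $\|f\|_\infty\le\|g\|_\infty$. The natural candidate is the ``$n$-th root'' substitution: since $\Gamma$ is torsion-free it is an ordered group, or at any rate it embeds in a divisible torsion-free (hence $\mathbb{Q}$-vector-space) group, so one can try to make sense of $\gamma^{1/n}$; but the clean way is to go to the dual side. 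On $G$ the characters $\gamma^n$ are honest continuous functions, and the map $x\mapsto nx$ on $G$ is a continuous surjective homomorphism (surjective because $G$ is connected, being the dual of a torsion-free group, so divisible). Thus $\gamma^n(x)=\gamma(nx)$, and if we set $f(x)=g$ composed with this map we do not change anything; the real trick is the reverse.

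Concretely, I would argue as follows. Let $S$ be the Sidon constant of $E$ and let $g=\sum_{\gamma\in A}c_\gamma\gamma^n$ with $A\subseteq E$ finite. Define $f=\sum_{\gamma\in A}c_\gamma\gamma$, a polynomial supported on $E$. Since $E$ is Sidon with constant $S$, condition (4) gives $\sum_{\gamma\in A}|c_\gamma|\le S\|f\|_\infty$. It remains to check $\|f\|_\infty\le\|g\|_\infty$, equivalently $\|g\|_\infty\ge\|f\|_\infty$. Here I use that $n:G\to G$ is a continuous surjection: for any $x\in G$ pick $y\in G$ with $ny=x$; then $g(y)=\sum c_\gamma\gamma^n(y)=\sum c_\gamma\gamma(ny)=\sum c_\gamma\gamma(x)=f(x)$. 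Hence $f(G)\subseteq g(G)$, so $\|f\|_\infty\le\|g\|_\infty$, and therefore $\sum_{\gamma\in A}|\widehat g(\gamma^n)|=\sum_{\gamma\in A}|c_\gamma|\le S\|g\|_\infty$. Since $g$ was an arbitrary polynomial supported on $E_n$, condition (4) shows $E_n$ is Sidon with Sidon constant at most $S$. For the reverse inequality one runs the same computation in the other direction: given $f=\sum c_\gamma\gamma$ supported on $E$, the polynomial $g=\sum c_\gamma\gamma^n$ supported on $E_n$ satisfies $g(ny)=f'(y)$ where... actually more simply, $\|g\|_\infty=\sup_y|g(y)|=\sup_y|\sum c_\gamma\gamma(ny)|=\sup_{x\in nG}|\sum c_\gamma\gamma(x)|=\|f\|_\infty$ since $nG=G$; thus $S(E)\le S(E_n)$ too, and the two constants coincide.

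The one genuine point to verify is the surjectivity (equivalently divisibility of $G$), and that the dual of a torsion-free discrete abelian group is connected, hence divisible as a compact group — this is standard Pontryagin duality (connectedness of $G$ is dual to torsion-freeness of $\Gamma$, and a connected compact abelian group is divisible), and the excerpt already notes ``These are the groups whose dual groups $G$ are connected.'' A secondary subtlety is that the map $\gamma\mapsto\gamma^n$ need not be injective on $\Gamma$ in general, but on a torsion-free group $\gamma^n=\chi^n\Rightarrow(\gamma\chi^{-1})^n=\mathbf 1\Rightarrow\gamma=\chi$, so $E\mapsto E_n$ is a bijection and the coefficient matching above is legitimate; this is also where torsion-freeness is used a second time. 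I expect the main (only) obstacle to be phrasing the duality input cleanly; the rest is the short substitution computation above.
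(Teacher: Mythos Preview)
Your argument is correct and is essentially identical to the paper's: both use the characterization (4) of the Sidon constant, pass from a polynomial $g=\sum c_\gamma\gamma^n$ supported on $E_n$ to $f=\sum c_\gamma\gamma$ supported on $E$, and invoke divisibility of $G$ (equivalently, surjectivity of $x\mapsto nx$) to get $\|f\|_\infty=\|g\|_\infty$. You are also right to flag explicitly that torsion-freeness makes $\gamma\mapsto\gamma^n$ injective so the coefficient matching is well posed---the paper uses this implicitly.
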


\begin{proof}
Assume $f(x)=\sum_{\gamma \in E}a_{\gamma }\gamma ^{n}(x)$ is a
trigonometric polynomial with supp$\widehat{f}\subseteq E_{n}$. Choose $%
x_{0}\in G$ such that $\left\vert \sum_{\gamma \in E}a_{\gamma }\gamma
(x_{0})\right\vert =\left\Vert \sum a_{\gamma }\gamma \right\Vert _{\infty }$
and pick $y\in G$ such that $y^{n}=x_{0}$. (We can do this since $\Gamma $
torsion-free implies $G$ is a divisible group.) As $\gamma ^{n}(y)=\gamma
(x_{0}),$ 
\begin{equation*}
\left\vert f(y)\right\vert =\left\vert \sum_{\gamma \in E}a_{\gamma }\gamma
(x_{0})\right\vert =\left\Vert \sum_{\gamma \in E}a_{\gamma }\gamma
\right\Vert _{\infty }\geq \frac{1}{S}\sum_{\gamma \in E}\left\vert
a_{\gamma }\right\vert ,
\end{equation*}%
where $S$ is the Sidon constant of $E$. Hence $E_{n}$ is a Sidon set with
constant at most $S$.

It is even easier to see that the Sidon constant of $E$ is at most the Sidon
constant of $E_{n},$ hence we have equality.
\end{proof}

We are now ready to prove our main result.

\begin{theorem}
\label{main}Assume $\Gamma $ is a torsion-free group. The following are
equivalent for $E\subseteq \Gamma \backslash \{\mathbf{1}\}$:

\begin{enumerate}
\item $E$ is Sidon;

\item For each positive integer $n$, $E$ is proportionally $n$%
-degree-independent;

\item For each $\varepsilon >0,$ $E$ is proportionally Sidon with Sidon
constant at most $1+\varepsilon .$
\end{enumerate}
\end{theorem}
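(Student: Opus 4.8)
The plan is to prove the cycle of implications $(1)\Rightarrow(2)\Rightarrow(3)\Rightarrow(1)$. The implication $(3)\Rightarrow(1)$ is the easy direction: proportional Sidon with uniformly bounded Sidon constant is exactly condition (3) of Theorem~\ref{Pisier}, which is already known to imply Sidonicity (and in any case follows from Pisier's theorem, since $1+\varepsilon$ is certainly a uniform bound). Similarly, $(2)\Rightarrow(1)$ is immediate from Pisier's characterization: $n$-degree independence for $n=1$ is quasi-independence, so proportionally $1$-degree independent is proportionally quasi-independent, hence Sidon. Thus the real content is $(1)\Rightarrow(2)$, and then $(2)\Rightarrow(3)$ will require showing that $n$-degree independent sets have Sidon constants close to $1$ when $n$ is large.

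For $(1)\Rightarrow(2)$, fix $n$. Since $\Gamma$ is torsion-free it has no non-trivial elements of order $\leq n$, so the hypothesis of Proposition~\ref{qi} is partly met; it remains to check that $E_k$ is Sidon for each $k=1,\dots,n$. This is exactly Lemma~\ref{div}, which says $E_k$ is Sidon with the same Sidon constant as $E$. Hence Proposition~\ref{qi} applies directly and gives the constant $\delta_n>0$ so that every finite $F\subseteq E$ contains an $n$-degree independent $H$ with $|H|\geq \delta_n|F|$. That is precisely statement (2).

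For $(2)\Rightarrow(3)$, fix $\varepsilon>0$. The key quantitative input needed is: an $n$-degree independent set has Sidon constant at most $1+\varepsilon$ once $n$ is large enough (depending only on $\varepsilon$). Granting this, choose such an $n=n(\varepsilon)$; by (2), $E$ is proportionally $n$-degree independent with proportion $\delta_n$, and every such subset has Sidon constant $\leq 1+\varepsilon$, which is exactly proportional Sidonicity with constant $1+\varepsilon$. To see the quantitative input, recall that an $n$-degree independent set is in particular dissociate (for $n\geq 2$) so a Riesz product construction applies, but one needs the sharper statement that higher degree of independence forces the interpolation to improve. The natural approach is a Riesz product of the form $\prod_\gamma\bigl(1+\sum_{k=1}^{n} c_k\,\mathcal{R}(\gamma^k)\bigr)$ with coefficients chosen so that the product is a positive measure of norm $1$ whose Fourier transform at each $\gamma\in H$ is forced close to a prescribed value on $\{\pm1\}$; $n$-degree independence guarantees no unwanted cancellation among the cross terms of degree $\leq n$, and the approximation error is controlled by a tail that decays as $n\to\infty$. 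One then invokes criterion (3) of Proposition~\ref{equiv} together with the iterative remark that (2)-type approximation with parameters $C$ and $\delta$ yields Sidon constant $C/(1-\delta)$, tuning both to make $C/(1-\delta)\leq 1+\varepsilon$.

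The main obstacle is the quantitative estimate in $(2)\Rightarrow(3)$: turning $n$-degree independence into a Sidon constant arbitrarily close to $1$. One must design the Riesz-type interpolating measure (most likely of the form $\prod_{\gamma\in H}\bigl(1+\sum_{k=1}^n a_{k,\gamma}\cos(\text{phase})+b_{k,\gamma}\sin(\text{phase})\bigr)$, kept nonnegative of total mass $1$) and show that, because relations of weight $\leq n$ among distinct elements of $H$ are trivial, the transform $\widehat{\mu}(\gamma)$ agrees with the target $\phi(\gamma)\in\{\pm1\}$ up to an error $\delta_n\to 0$; choosing $n$ large then pushes $C/(1-\delta_n)$ below $1+\varepsilon$. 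The torsion-freeness is used throughout to ensure $\gamma^k\neq \mathbf 1$ and that distinct $\gamma,\chi$ give distinct $\gamma^k$ up to conjugation, so the frequencies appearing in the Riesz product are genuinely separated.
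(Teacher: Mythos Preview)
Your proposal is correct and follows essentially the same route as the paper: $(1)\Rightarrow(2)$ via Lemma~\ref{div} and Proposition~\ref{qi}, the reverse implications via Pisier's Theorem~\ref{Pisier}, and $(2)\Rightarrow(3)$ via a Riesz product exploiting high-degree independence. The only refinement in the paper's argument is that, rather than approximating $\{\pm 1\}$-valued targets and iterating via the $C/(1-\delta)$ bound, it first builds a nonnegative trigonometric polynomial $p$ of some degree $N=N(\varepsilon)$ with $\widehat{p}(0)=1$ and $\widehat{p}(\pm 1)\geq 1/(1+\varepsilon)$ (a Fej\'er-type kernel), and then uses the factors $P_\gamma(x)=\tfrac{|\phi(\gamma)|}{\widehat{p}(1)}\,p(u_\gamma\gamma(x))+1-\tfrac{|\phi(\gamma)|}{\widehat{p}(1)}$ to interpolate any $\phi$ with $\|\phi\|_\infty\le 1/(1+\varepsilon)$ \emph{exactly} by a norm-one positive measure on an $(N+1)$-degree independent set; this avoids the iteration and yields the Fatou--Zygmund statement as well.
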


\begin{proof}
The fact that (2) and (3) each imply (1) is a consequence of Pisier's
proportional characterizations Theorem \ref{Pisier}.

The fact that (1) implies (2) follows directly from the previous Lemma and
Prop. \ref{qi}.

We turn now to the proof that (1) implies (3). Fix $\varepsilon >0$ and
choose $\eta >0$ so that $(1-\eta )/(1+\eta )\geq 1/(1+\varepsilon )$. Pick $%
n$ such that $\left\vert e^{2\pi it}-1\right\vert <\eta /2$ on $[-1/n,1/n]$
and consider the continuous, even function $f$ $:\mathbb{T=[-}1/2,1/2]%
\mathbb{\rightarrow R}$ given by $f(x)=n-n\left\vert x\right\vert $ for $%
x\in \lbrack -1/n,1/n]$ and $f(x)=0$ otherwise. Obviously, $f\geq 0$ and $%
\widehat{f}(0)=\left\Vert f\right\Vert _{1}=1$. An easy calculation shows $%
\widehat{f}(\pm 1)\geq 1-\eta /2$.

Select an even, real-valued trigonometric polynomial $q$ such that $%
\left\Vert f-q\right\Vert _{\infty }<\eta /2$ and let $p$ be the even,
positive, trigonometric polynomial given by 
\begin{equation}
p=\frac{q+\eta /2}{\widehat{q}(0)+\eta /2}.  \label{poly}
\end{equation}%
This normalization ensures that $\widehat{p}(0)=1$ and $\widehat{p}(\pm
1)\geq (1-\eta )/(1+\eta )\geq 1/(1+\varepsilon )$. Let $N$ be the degree of 
$p$.

Since Sidon sets are proportionally $n$-degree independent for each $n$,
there exists $\delta >0$ such that each finite $F\subseteq E$ admits an $%
(N+1)$-degree independent subset $H$ with $\left\vert H\right\vert \geq
\delta \left\vert F\right\vert $.

We now give a Riesz product construction to bound the Sidon constant of $H$.
Let $\phi :H\rightarrow \mathbb{C}$ with $\left\Vert \phi \right\Vert
_{\infty }\leq 1/(1+\varepsilon )$. Let $u_{\gamma }=\phi (\gamma
)/\left\vert \phi (\gamma )\right\vert \mathbb{\ }$be a complex number of
modulus one, and define the trigonometric polynomial $P_{\gamma }$ on $G$ by 
\begin{equation*}
P_{\gamma }(x)=\frac{\left\vert \phi (\gamma )\right\vert }{\widehat{p}(1)}%
\sum_{n=-N}^{N}\widehat{p}(n)(u_{\gamma }\gamma (x))^{n}+1-\frac{\left\vert
\phi (\gamma )\right\vert }{\widehat{p}(1)}\text{ for }x\in G.
\end{equation*}%
Then 
\begin{equation*}
\widehat{P_{\gamma }}(\mathbf{1})=\frac{\left\vert \phi (\gamma )\right\vert 
}{\widehat{p}(1)}\widehat{p}(0)+1-\frac{\left\vert \phi (\gamma )\right\vert 
}{\widehat{p}(1)}=1,
\end{equation*}%
\begin{equation*}
\widehat{P_{\gamma }}(\gamma )=\frac{\left\vert \phi (\gamma )\right\vert }{%
\widehat{p}(1)}\widehat{p}(1)u_{\gamma }=\phi (\gamma )
\end{equation*}%
and the degree of $P_{\gamma }$ is $N$. Since $\left\vert \phi (\gamma
)\right\vert /\widehat{p}(1)\leq 1$, $P_{\gamma }\geq 0$ and therefore $%
\left\Vert P_{\gamma }\right\Vert _{1}=1$.

Let $P=\prod_{\gamma \in H}P_{\gamma }$. Since $H$ is $(N+1)$-degree
independent, standard arguments show that $\left\Vert P\right\Vert _{1}=%
\widehat{P}(\mathbf{1})=1$ and $\widehat{P}(\gamma )=\phi (\gamma )$ for all 
$\gamma \in H$. As $\left\Vert \phi \right\Vert \leq 1/(1+\varepsilon )$,
this proves that $H$ is a Sidon set with Sidon constant bounded by $%
1+\varepsilon ,$ as we desired to show.
\end{proof}

\begin{remark}
(i) An antisymmetric Sidon set that has the additional property that the
interpolating measure can always be chosen to be positive is called a
Fatou-Zygmund set with the Fatou-Zymund constant defined in the obvious way.
As the Riesz product measure $P$ constructed in the proof of the Theorem is
a positive measure, we actually have shown that any Sidon set in a
torsion-free group is proportionally Fatou-Zygmund with Fatou-Zygmund
constants arbitrarily close to $1$.

(ii) Since finite sets have the same Sidon and $I_{0}$ constants (\cite{Gr}%
), it also follows that $E$ is Sidon if and only if for each $\varepsilon
>0, $ $E$ is proportionally $I_{0}$ with $I_{0}$ constant at most $%
1+\varepsilon .$
\end{remark}

\section{Sidon sets in torsion groups}

When the group $\Gamma $ has elements of finite order, the situation is
quite different. In \cite{Bo}, Bourgain proved that every Sidon set in $%
\Gamma =\mathbb{Z}_{n}^{\mathbb{N}},$ where $n$ has no repeated prime
factors, is a finite union of independent sets. His methods actually show
the following.

\begin{proposition}
Suppose $\Gamma =\oplus _{i=1}^{N}\mathbb{Z}_{p_{i}}^{\mathbb{N}}$, $p_{i}$
prime and assume $p_{1}=\min \{p_{j}\}_{j=1}^{N}$. Then any Sidon set in $%
\Gamma $ is a finite union of sets that are $(p_{1}-1)$-degree independent.
\end{proposition}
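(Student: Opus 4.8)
The plan is to mimic the structure of the torsion-free argument (Lemma~\ref{2} and Proposition~\ref{qi}), adapting the key estimates to the setting where $\Gamma = \oplus_{i=1}^{N}\mathbb{Z}_{p_i}^{\mathbb{N}}$ and where, crucially, each character has order dividing some $p_i$. Set $n = p_1 - 1$. Since $p_1 = \min\{p_j\}$, every nontrivial element of $\Gamma$ has order $\geq p_1 > n$, so Remark after the definition of $n$-degree independence applies: a set is $n$-degree independent iff the only way $\prod \gamma_i^{m_i} = \mathbf 1$ with $|m_i|\le n$ is the trivial one. First I would observe that a Sidon set $E$ in $\Gamma$ has the property that $E_k = \{\gamma^k : \gamma \in E\}$ is Sidon for $k = 1,\dots,n$: indeed each $p_i$ is coprime to $k$ (as $k \le p_1 - 1 < p_i$), so $\gamma \mapsto \gamma^k$ is a bijection on each factor $\mathbb Z_{p_i}$, hence an automorphism of $\Gamma$; Sidonicity is preserved under automorphisms of $\Gamma$ (equivalently, under the dual automorphism of $G$), with the same Sidon constant. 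This replaces Lemma~\ref{div}, which used divisibility of $G$; here we use instead that the exponentiation map is invertible mod each prime.

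With that in hand, Lemma~\ref{2} applies verbatim: its hypotheses are exactly that $\Gamma$ has no nontrivial elements of order $\le n$ and that $E_k$ is Sidon for $k\le n$, both of which now hold. So for all $0<\lambda<1/n$ and finite $A\subseteq E$,
\begin{equation*}
\int \prod_{\gamma\in A}\Bigl(1+\lambda\sum_{k=1}^{n}\mathcal R(\gamma^k)\Bigr) \le \exp\bigl(K_n|A|n^3\lambda^2\bigr).
\end{equation*}
Then the entire argument of Proposition~\ref{qi} goes through unchanged: the random selection $F(\omega) = \{\gamma : \varepsilon_\gamma(\omega)=1\}$ with $\mathbb P\{\varepsilon_\gamma=1\}=\lambda/2$, the bound $\mathbb E(\mathcal C_n(F(\omega))) \le \exp(K_n n^3\lambda^2|F|)$, the Markov and Chebyshev estimates, and the combinatorial ``maximal $n$-relation set'' argument producing a subset $H \subseteq F$ that is $n$-degree independent with $|H| \ge \delta_n|F|$. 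The only input specific to the group was the two facts just established, so Proposition~\ref{qi} yields: $E$ is proportionally $(p_1-1)$-degree independent, with proportionality constant $\delta_{p_1-1}>0$ depending only on $p_1$ and the Sidon constant of $E$.

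Finally I would pass from ``proportionally $n$-degree independent'' to ``finite union of $n$-degree independent sets.'' This is the standard iteration: given finite $F \subseteq E$, peel off an $n$-degree independent $H_1 \subseteq F$ with $|H_1| \ge \delta|F|$, then apply the same to $F\setminus H_1$, and so on; after $m = O(\delta^{-1}\log|F|)$ steps we exhaust $F$, but to get a uniform bound one instead uses the standard fact (essentially Pisier's observation, see \cite[ch.~7]{GHbook}) that if a set is proportionally $\mathcal B$ with constant $\delta$ and $\mathcal B$ is closed under subsets, and moreover the relevant combinatorial parameter is subadditive, then $E$ is a union of at most $\lceil 1/\delta\rceil$ (or a fixed function of $\delta$) many sets in $\mathcal B$ --- here one must be slightly careful because a union of two $n$-degree independent sets need not be $n$-degree independent, so the correct statement is that $E$ decomposes into boundedly many $n$-degree independent pieces by a greedy argument on a fixed finite $F$ combined with a compactness/limiting argument to handle all of $E$ at once. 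I expect the main obstacle to be precisely this last point: making the ``proportional $\Rightarrow$ finite union'' step clean, since unlike Sidonicity, $n$-degree independence is not finitely-unionable, so one cannot simply quote a black box and must invoke the specific combinatorial lemma (the analogue of what Pisier and Bourgain use) that a proportional-size independent-type subset in every finite set forces a bounded-length partition. Everything else is a direct transcription of the torsion-free proof with ``divisible'' replaced by ``exponentiation invertible mod small primes.''
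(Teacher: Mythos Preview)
Your adaptation of Lemma~\ref{div} via the automorphism $\gamma\mapsto\gamma^k$ is correct, and with it Proposition~\ref{qi} does apply to give that $E$ is \emph{proportionally} $(p_1-1)$-degree independent. The gap is entirely in your last paragraph: the passage from ``proportionally $n$-degree independent'' to ``finite union of $n$-degree independent sets'' is not a standard fact --- it is an open problem. The paper itself lists ``is every Sidon set a finite union of quasi-independent sets?'' among the unresolved questions, precisely because Pisier's proportional characterization (Theorem~\ref{Pisier}) does not by any known argument yield such a decomposition. Your greedy peeling gives $O(\delta^{-1}\log|F|)$ pieces, not boundedly many, as you yourself note; the ``standard fact'' you then invoke to repair this does not exist. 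Subadditivity or closure-under-subsets of $\mathcal B$ is not enough: one would need something like a matroid exchange property, which $n$-degree independence does not enjoy in general groups.

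The paper gives no proof of this proposition; it simply asserts that Bourgain's methods from \cite{Bo} establish it. Those methods are quite different from the Pisier-style random selection you propose: Bourgain (building on Malliavin--Malliavin \cite{Ma} for the single-prime case) exploits the specific algebraic structure of $\oplus_i\mathbb{Z}_{p_i}^{\mathbb{N}}$ --- essentially that relations among characters reduce to linear algebra over the fields $\mathbb{Z}_{p_i}$ --- to obtain the finite-union decomposition directly, without passing through a proportional statement. So your route proves something true and interesting (proportional $(p_1-1)$-degree independence in these groups), but it does not, and as far as anyone knows cannot, reach the stated finite-union conclusion.
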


However, such Sidon sets are not necessarily proportionally Sidon with Sidon
constants arbitrarily close to $1$. Indeed, it is easy to see using Prop. %
\ref{equiv}(4) that if, for example, $\Gamma =\mathbb{Z}_{p}^{\mathbb{N}}$
for a prime number $p,$ then any two element subset of $\Gamma $ (even if an
independent set) has Sidon constant bounded below by%
\begin{equation*}
\sup_{\alpha ,\beta }\left( \min_{\xi \text{ }p\text{-root unity}}\frac{%
\left\vert \alpha \right\vert +\left\vert \beta \right\vert }{\left\vert
\alpha +\beta \xi \right\vert }\right) \geq \sec (\pi /(2p).
\end{equation*}

We can, however, obtain our `small constants' proportionality result for
products of cyclic groups $\mathbb{Z}_{p_{i}}$ where $(p_{i})$ tends to
infinity.

\begin{proposition}
Suppose $\Gamma =\oplus _{i=1}^{\infty }\mathbb{Z}_{p_{i}}$ where $%
(p_{i})_{i}$ is a sequence of prime numbers tending to infinity. If $%
E\subseteq \Gamma $ is Sidon, then for all $\varepsilon >0$ there is some $%
\delta >0$ such that for all finite $F\subseteq E,$ there exists a further
finite subset $H\subseteq F$ with Sidon constant bounded by $1+\varepsilon $
and satisfying $\left\vert H\right\vert \geq \delta \left\vert F\right\vert $%
.
\end{proposition}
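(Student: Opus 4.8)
**

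The plan is to mimic the torsion-free argument (Theorem~\ref{main}) as closely as possible, replacing the divisibility trick of Lemma~\ref{div} by a ``local'' argument that only uses characters of large prime order. First I would observe that, since $E$ is Sidon in $\Gamma=\oplus_i\mathbb{Z}_{p_i}$ and $p_i\to\infty$, for every fixed $n$ all but finitely many coordinates $\mathbb{Z}_{p_i}$ have $p_i>n$; writing $\Gamma=\Gamma_0\oplus\Gamma'$ where $\Gamma_0$ collects the finitely many small-order coordinates, any finite $F\subseteq E$ meets only finitely many cosets of $\Gamma'$, so by pigeonhole a fixed proportion of $F$ lies in a single coset, and after translating (which changes nothing about Sidonicity or Sidon constants, since translation corresponds to multiplying the interpolating measure by a character) we may assume $F\subseteq\Gamma'$, a group all of whose nontrivial elements have order $>n$. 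The Sidon constant of this piece is controlled by that of $E$ together with the number of cosets, hence by $n$ alone.

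Next I would want the analogue of Proposition~\ref{qi}: that $E\cap\Gamma'$ is proportionally $n$-degree independent. For this I need $E_k=\{\gamma^k:\gamma\in E\}$ to be Sidon for $k=1,\dots,n$ inside $\Gamma'$. Here the hypothesis that all orders exceed $n$ makes the map $\gamma\mapsto\gamma^k$ injective on $\Gamma'$ for $k\le n$, and one checks directly (as in the proof of Lemma~\ref{div}, now using that raising to the $k$-th power is a surjective endomorphism of the connected dual piece, or equivalently that $k$ is invertible modulo each $p_i$ occurring in $\Gamma'$) that $E_k$ inherits the Sidon constant of $E$. With $E_k$ Sidon for $k\le n$ and $\Gamma'$ having no elements of order $\le n$, Lemma~\ref{2} and then Proposition~\ref{qi} apply verbatim to give the desired $\delta_n>0$ and an $n$-degree independent $H\subseteq F$ with $|H|\ge\delta_n|F|$.

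Finally I would run the Riesz product construction from the proof of Theorem~\ref{main}: given $\varepsilon>0$, build the same even positive trigonometric polynomial $p$ with $\widehat p(0)=1$ and $\widehat p(\pm1)\ge 1/(1+\varepsilon)$, let $N=\deg p$, pass to an $(N+1)$-degree independent $H\subseteq F$ of proportional size, and form $P=\prod_{\gamma\in H}P_\gamma$ with the same factors $P_\gamma$. The only point needing care is that in Theorem~\ref{main} the polynomial $p$ was defined on $\mathbb{T}$ and the factors $P_\gamma$ used the characters $\gamma^j$; here $\gamma$ has finite order $p_i$, so $\gamma^j$ for $|j|\le N$ need not be distinct characters unless $p_i>2N$. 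But that is exactly guaranteed for all but finitely many coordinates by $p_i\to\infty$, so after absorbing those finitely many bad coordinates into $\Gamma_0$ at the start (enlarging $\Gamma_0$ to include all $p_i\le 2N$), the argument goes through and $(N+1)$-degree independence of $H$ forces $\|P\|_1=\widehat P(\mathbf 1)=1$ and $\widehat P(\gamma)=\phi(\gamma)$ for all $\gamma\in H$, giving Sidon constant at most $1+\varepsilon$.

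The main obstacle, and the place where this differs from the torsion-free case, is precisely the bookkeeping of ``small'' coordinates: the constant $\delta$ must not degrade as $\varepsilon\to0$, yet the number of coordinates we have to quarantine (those with $p_i\le 2N(\varepsilon)$) grows as $\varepsilon\to0$. One must therefore order the steps carefully — first fix $\varepsilon$, hence $N$, hence the finite bad set of coordinates and the resulting coset-pigeonhole constant, and only then apply the $n$-degree independence extraction with $n=N+1$ — so that $\delta$ depends on $\varepsilon$ (which is allowed by the statement) but each individual reduction step loses only a definite proportion. Verifying that the pigeonhole/translation reduction, the $E_k$-Sidon claim, and the Riesz product estimate all compose to a genuine positive $\delta=\delta(\varepsilon)$ is the technical heart of the proof.
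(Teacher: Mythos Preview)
Your proposal is correct and follows essentially the same route as the paper: fix $\varepsilon$, hence the polynomial $p$ and $N=\deg p$; split off the finitely many coordinates with small $p_i$, pigeonhole $F$ into one coset of the remaining factor and translate; show the power sets $Y_k$ are Sidon for $k\le N$ (via the fact that $k$ is invertible modulo each surviving $p_i$ --- note the dual here is totally disconnected, not connected, so it is this invertibility, not connectedness, that replaces Lemma~\ref{div}); apply Proposition~\ref{qi} to extract an $(N+1)$-degree independent subset; and run the Riesz product. The paper uses the threshold $p_i>N+1$ rather than your $p_i>2N$, but this is immaterial.
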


\begin{proof}
Fix $\varepsilon >0$ and suppose $F$ is a finite subset of $E$. Let $p$ be
the polynomial defined in (\ref{poly}) and put $N=\deg p$. Choose $n_{0}$
such that $p_{i}>N+1$ for all $i>n_{0}$. Let $\Gamma _{1}=\oplus
_{i=1}^{n_{0}}\mathbb{Z}_{p_{i}}$ and $M=\left\vert \Gamma _{1}\right\vert $%
. Choose $F_{1}\subseteq F$ such that $F_{1}=\gamma Y$ where $\gamma \in
\Gamma _{1}$, $Y\subseteq \oplus _{i>n_{0}}\mathbb{Z}_{p_{i}}$ and $%
\left\vert F_{1}\right\vert \geq \left\vert F\right\vert /M$. Since
translation preserves Sidon constants, $Y$ is a Sidon set with constant at
most that of $E$.

Now consider $Y_{k}=\{\chi ^{k}:\chi \in Y\}$ for $k\leq N$. Since the
elements of $\mathbb{Z}_{p_{i}}$ for $i>n_{0}$ have prime order exceeding $N$%
, essentially the same argument as in the proof of Lemma \ref{div} shows
that each $Y_{k}$ is Sidon with Sidon constant the same as $E$.

Applying Prop. \ref{qi} we see there is a constant $\delta >0$ (depending on 
$N)$ and an $(N+1)$-degree independent set $Y_{0}\subseteq Y$ such that $%
\left\vert Y_{0}\right\vert \geq \delta \left\vert Y\right\vert $. For $%
Y_{0},$ being $(N+1)$-degree independent is the same as saying $%
\prod_{i=1}^{k}\gamma _{i}^{m_{i}}=\mathbf{1}$ for $\left\vert
m_{i}\right\vert \leq N+1$ only if $\gamma _{i}=\mathbf{1}$ for all $i$.
That fact allows us to apply the Riesz product construction of the proof of
Theorem \ref{main} (with the polynomial $p$) and as in that proof we deduce
that the Sidon constant of $Y_{0}$ is at most $1+\varepsilon $. Of course,
this is also a bound on the Sidon constant of $H=\gamma Y_{0}$ and this
subset of $F$ has cardinality at least $(\delta /M)\left\vert F\right\vert $%
, completing the proof.
\end{proof}

\end{document}